\newdefinition{defn}{Definition}[section]
\newdefinition{nim}[defn]{}
\newdefinition{rem}[defn]{Remark}
\newdefinition{ex}[defn]{Example}
\newtheorem{thm}{Theorem}[section]
\newtheorem{cor}[thm]{Corollary}
\newtheorem{lem}[thm]{Lemma}
\newtheorem{prop}[thm]{Proposition}
\newtheorem{qu}[thm]{Question}
\newcommand{\Hom}{\operatorname{Hom}}
\newcommand{\Ext}{\operatorname{Ext}}
\newcommand{\Aut}{\operatorname{Aut}}
\newcommand{\Out}{\operatorname{Out}}
\newcommand{\Inn}{\operatorname{Inn}}
\newcommand{\HH}{\operatorname{HH}}
\newcommand{\Hc}{\operatorname{H}}
\newcommand{\Od}{\mathcal{O}}
\begin{document}

\begin{frontmatter}



\title{Nontriviality of  the first Hochschild cohomology of some block algebras of finite groups}


\author{Constantin-Cosmin Todea}
\ead{Constantin.Todea@math.utcluj.ro}
\address{Department of Mathematics, Technical University of Cluj-Napoca, Str. G. Baritiu 25,
 Cluj-Napoca 400027, Romania }

\fntext[]{This work was supported by a grant of the Ministry of Research, Innovation and Digitization, CNCS/CCCDI–UEFISCDI, project number PN-III-P1-1.1-TE-2019-0136, within PNCDI III}
\begin{abstract}
We show that for some finite group block algebras, with nontrivial defect groups, the first Hochschild cohomology is nontrivial.
Along the way we obtain methods to investigate the nontriviality of  the first Hochschild cohomology of some twisted group algebras.
\end{abstract}

\begin{keyword} block algebras, defect groups, Hochschild, finite groups, cohomology

\MSC 16E40 \ 20C20
\end{keyword}

\end{frontmatter}


\section{Introduction} \label{sec1-intro}
The first Hochschild cohomology space of a finite dimensional algebra over a field is isomorphic with the quotient space
of derivations of the given algebra modulo inner derivations. It is an important tool attached to an algebra, studied in many articles over the years,
which inherits a Lie algebra structure with brackets induced by the commutator derivations. One problem which is 
extensively studied in recent years, see \cite{Ch, Ei, Ru} to mention just a few,  is the problem of solvability of the Lie algebra structure of the first Hochschild cohomology.
The investigation of the first Hochschild cohomology of block algebras of finite groups and its Lie algebra structure is done in \cite{Be, Li3}. For instance in \cite{Li3} the authors show that the first Hochschild cohomology space of a finite group block algebra, having a unique class of simple modules, is a simple Lie algebra if and only if the block algebra is nilpotent 
with elementary abelian defect group of order at least $3$.
 
A result of Fleischmann, Janiszczak and Lempken in \cite{FJL} on centralizers in finite groups, 
which uses the Classification of Finite Simple Groups Theorem, implies that  the first Hochschild cohomology 
of a group algebra, $\HH^1(kG)$ is nontrivial for any finite group $G$ of order divisible by a prime $p$,
where $k$ is an algebraically closed field of characteristic $p$.
 The group algebra $kG$  decomposes into indecomposable
factors called block algebras, which correspond to primitive idempotents of the center $Z(kG)$.
Let $b$ be a block idempotent of $kG$ with $B=kGb$ its block algebra and  a
defect group $P$ of $b$, which is a certain $p$-subgroup in $G$ behaving similarly to a Sylow $p$-subgroup. Since 
$\HH^1(kG)$ decomposes into the direct product of the first Hochschild cohomology of all block
algebras of $kG$, it follows that there is a block which has nonzero first Hochschild cohomology.
 In \cite{Li} Markus Linckelmann launches the following question:
\begin{qu}\label{question}(\cite[Question 7.7]{Li}) Is it true that for any block algebra
$B$ of a finite group algebra $kG$ with $P\neq \{1\}$  we have $\HH^1(B)\neq 0$?
\end{qu}
The first cases of blocks for which Question \ref{question} has positive answer are blocks of symmetric
groups with abelian nontrivial defect groups, \cite[Example 7.5]{Li}.

Many finite group block algebras are Morita equivalent to
some twisted group algebras.  If $b$ has normal defect group in $G$  then $B$ is Morita equivalent with some twisted group algebra (with underlying group a semidirect product of the defect group and the so-called inertial quotient), see \cite{Ku1}. If $G$ is $p$-solvable, then $B$ is Morita equivalent
to some twisted group algebra over a different $p$-solvable finite group. This result is obtained by  K\"ulshammer   in \cite{Ku2}. We are now  motivated to study  the first Hochschild cohomology of twisted group algebras.

If $L$ is a finite group acting trivially on $k^{\times}$ (the group of multiplicative units in $k$)
and $\alpha\in Z^2(L,k^{\times})$ is a $2$-cocycle, 
we denote by $k_{\alpha}L$ the twisted group algebra of $L$ with respect to $\alpha$. A twisted group algebra is invariant
with respect to $2$-cocycles belonging to the same cohomology class. Since any class in $ \Hc^2(G,k^{\times})$ can be
represented by a normalized $2$-cocycle we assume in this paper that $\alpha$ is normalized ($\alpha (1,1)=1$), although we do not always need this. An element
$x\in L$ is called  $\alpha$-regular if $$\alpha(x,y)=\alpha(y,x), \forall y\in C_L(x).$$
We denote by $L_{\alpha}^{\circ}$ the set of $\alpha$-regular elements in $L$. This definition and other properties can be found in
\cite[Chapter 3, Section 6]{Ka} but, for completeness, 
Section \ref{sec2-reminder} is devoted to the presentation of some basic facts 
about $2$-cocycles and $\alpha$-regular elements which are useful
in this paper. For the rest of the paper we continue to assume that the prime $p$ is the characteristic of the field $k$. Recall that $L$ satisfies the commutator 
index property $C(p)$ if there is an element $x\in L$ such that $p$ divides $|C_L(x):(C_L(x))'|$, see \cite[Definition 1.1]{FJL}. 
In this case we will say that $x$ is an element of $L$  which gives the Commutator index property $C(p)$ for $L$.
We denote by $L_{C(p)}$ the set of all elements of $L$ which give the Commutator index property
$C(p)$ for $L$. As a consequence of \cite[Theorem]{FJL}  we obtain that $L_{C(p)}\neq \emptyset$ for any finite group $L$ such that $p$ divides the order of $L$. In the first main  result of this paper we investigate when first Hochschild cohomologies of some twisted group algebras are nontrivial.
\begin{thm}\label{thm12}
 Let $L$ be a finite group such that $p$ divides the order of $L$ and 
 $\alpha \in Z^2(L,k^{\times})$. If $L_{C(p)}\cap L_{\alpha}^{\circ}\neq \emptyset$ then $\HH^1(k_{\alpha}L)\neq 0.$
\end{thm}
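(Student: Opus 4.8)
The plan is to identify $\HH^1(k_\alpha L)$ with an ordinary group cohomology and then isolate inside it a nonzero direct summand attached to the centralizer of the element $x$ furnished by the hypothesis.

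\emph{Step 1 (from derivations to group cocycles).} Write $\{u_g\}_{g\in L}$ for the standard basis of $A:=k_\alpha L$, so that $u_gu_h=\alpha(g,h)u_{gh}$ and each $u_g$ is a unit, and let $A^{\mathrm{ad}}$ denote $A$ viewed as a left $kL$-module via the conjugation action $g\cdot a=u_gau_g^{-1}$. For a derivation $d$ of $A$ put $\phi_d(g)=d(u_g)u_g^{-1}$; a short computation with the multiplication rule gives $\phi_d(gh)=\phi_d(g)+g\cdot\phi_d(h)$, so $\phi_d\in Z^1(L,A^{\mathrm{ad}})$, and conversely every $1$-cocycle $\phi$ arises from the derivation $u_g\mapsto\phi(g)u_g$. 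This assignment is a linear bijection between derivations and $Z^1(L,A^{\mathrm{ad}})$ which carries inner derivations onto principal cocycles, whence $\HH^1(k_\alpha L)\cong H^1(L,A^{\mathrm{ad}})$.

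\emph{Step 2 (decomposition along conjugacy classes).} The conjugation action permutes the basis $\{u_g\}$ up to scalars and preserves each subspace $M_{[z]}=\bigoplus_{z'\in[z]}ku_{z'}$, giving a direct sum decomposition $A^{\mathrm{ad}}=\bigoplus_{[z]}M_{[z]}$ of $kL$-modules over the conjugacy classes of $L$. Since the $L$-set $[z]$ is isomorphic to $L/C_L(z)$, one obtains $M_{[z]}\cong\Ind_{C_L(z)}^{L}(ku_z)$, where $ku_z$ is the one-dimensional $kC_L(z)$-module on which $y\in C_L(z)$ acts by the scalar $\mu_y(z)$ defined by $u_yu_zu_y^{-1}=\mu_y(z)u_z$. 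Combining this with Shapiro's lemma yields
\[
\HH^1(k_\alpha L)\ \cong\ \bigoplus_{[z]}H^1\bigl(L,M_{[z]}\bigr)\ \cong\ \bigoplus_{[z]}H^1\bigl(C_L(z),ku_z\bigr).
\]

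\emph{Step 3 (using $\alpha$-regularity and $C(p)$).} Pick $x\in L_{C(p)}\cap L_\alpha^{\circ}$ and set $H=C_L(x)$. Since $x$ is $\alpha$-regular we have $\alpha(x,y)=\alpha(y,x)$ for all $y\in H$, hence $u_xu_y=u_yu_x$, i.e. $\mu_y(x)=1$ for all $y\in H$ (the needed properties of $\alpha$-regular elements are recalled in Section~\ref{sec2-reminder}). Thus $ku_x$ is the trivial $kH$-module and the $[x]$-summand above is $H^1(H,k)\cong\Hom(H,k^{+})$, where $k^{+}$ denotes the additive group of $k$. Now $x\in L_{C(p)}$ means $p\mid |H:H'|$, so $H/H'$ is a finite abelian group of order divisible by $p$; it surjects onto $\mathbb{Z}/p$, and composing with the embedding $\mathbb{Z}/p\hookrightarrow k^{+}$ (available because $p=\operatorname{char}k$) produces a nonzero element of $\Hom(H,k^{+})$. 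As this lies in a direct summand of $\HH^1(k_\alpha L)$, we conclude $\HH^1(k_\alpha L)\neq 0$.

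The derivation–cocycle dictionary of Step~1 and the application of Shapiro's lemma are routine. The delicate point is the bookkeeping of the $2$-cocycle $\alpha$ inside the conjugation action: one must pin down the correct scalar-twisted identification $M_{[z]}\cong\Ind_{C_L(z)}^{L}(ku_z)$, check that $\alpha$-regularity of $x$ is precisely what makes $ku_x$ the trivial module, and verify that Shapiro's isomorphism sends principal cocycles to principal cocycles, so that the resulting nonzero cohomology class genuinely corresponds to a non-inner derivation of $k_\alpha L$.
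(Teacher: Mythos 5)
Your proposal is correct and follows essentially the same route as the paper: both arguments rest on the centralizer decomposition of $\HH^1(k_\alpha L)$ into summands $H^1(C_L(z),ku_z)$ indexed by conjugacy classes, use $\alpha$-regularity of $x$ to make the summand for $[x]$ equal to $H^1(C_L(x),k)$, and then use $p\mid|C_L(x):C_L(x)'|$ to produce a nonzero homomorphism $C_L(x)\to(k,+)$. The only difference is that you rederive the decomposition (via the derivation--cocycle dictionary and Shapiro's lemma, choosing $x$ itself as class representative) where the paper imports it from \cite[Lemma 3.5]{FJL} and passes to an $\alpha$-regular representative $x_{i_0}$; your verification of the twisted module structure $\mu_y(x)=\alpha(y,x)\alpha(x,y)^{-1}$ matches the paper's module $k\overline{x_i}$, so the argument is sound.
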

 We denote by $L_{S(p)}$ the set of all $p$-elements giving the strong Non-Schur
property $S(p)$ for $L$. This is the set of $p$-elements $x\in L$ such that $x\notin C_L(x)'$. If $L_{S(p)}$ would be nonempty for all finite groups $L$, then Lemma \ref{lem24} (vi) should give  $L_{C(p)}\cap L_{\alpha}^{\circ}\neq \emptyset$. But, there are cases of finite groups for which 
$L_{S(p)}=\emptyset$, see \cite[Proposition 2.2]{FJL}.
We do not know if any finite group $L$, with its order divisible by $p$, has a $p$-element which gives
the Commutator index property $C(p)$. If this is true then any twisted group algebra of $L$ has nontrivial 
first Hochschild cohomology.   By email discussion  with Wolfgang Lempken  it seems that the exceptions of \cite[Proposition 2.2]{FJL}, the sporadic simple groups $Ru, J_4$ and  $Th$  do not have this kind of $p$-elements  for the primes  $3, 3, 5$, respectively. So, we launch the following question:

\begin{qu}\label{qutwist}Is the first Hochschild cohomology of  twisted group algebras  of  finite groups (with order divisible by $p$) nontrivial?
If not, can one find examples of such twisted group algebras which have  zero  first Hochschild cohomology?
\end{qu}
In Proposition \ref{propfgC(p)good} we collect classes of finite groups which satisfy
the assumption of Theorem \ref{thm12}. Applying Theorem \ref{thm12} and Proposition \ref{propfgC(p)good} to  a finite group $L$ with order divisible
by $p$, which satisfies one of the five
assumptions  of Proposition \ref{propfgC(p)good} ($L$ is non-$p$-perfect;  it has a normal
Sylow $p$-subgroup, etc.), we obtain that Question \ref{qutwist} has a positive answer.

As application of Theorem \ref{thm12} we obtain classes of block algebras of finite groups
for which the answer of Question \ref{question} is positive. An inertial block is a block which is basic Morita equivalent
to its Brauer correspondent. We will discuss more details about inertial blocks and blocks of $p$-solvable groups in Section \ref{sec4}. Let $(P,e)$ be a fixed, maximal $b$-Brauer pair, where $e$ is a block of $kC_G(P)$.  Recall that the set of $b$-Brauer pairs is a $G$-poset. We denote by "$\leq$" the partial order relation on the set of $b$-Brauer pairs; see \cite{AlBr,AKO} for the theory of $b$-Brauer pairs and their generalizations.  We denote by $\mathcal{F}:=\mathcal{F}_{(P,e)}(G,b)$ the saturated fusion system associated with $b$ determined by the choice of $(P,e)$. This is the finite  category with objects all subgroups of $P$ and morphisms given by conjugation between $b$-Brauer pairs, see \cite{AKO,LiBo1} for more details regarding fusion systems of blocks. If $(Q,f)$ is a Brauer $b$-subpair of $(P,e)$ (i.e. $(Q,f)\leq (P,e)$) then $Q$ is called $\mathcal{F}$-centric if $Z(Q)$ is a defect group of $kC_G(Q)f$, see \cite[Proposition 8.5.3]{LiBo2}.
\begin{cor}\label{cor13} Let $B=kGb$ be a block algebra with nontrivial defect group $P$. Let $Q$ be an $\mathcal{F}$-centric subgroup of $P$ and let $f$ be the block of $kC_G(Q)$ such that $(Q,f)\leq (P,e)$. Assume
that one of the following statements is true:
\begin{itemize}
 \item[(a)] $P$ is normal in $G$;
 \item[(b)] $b$ is inertial;
 \item[(c)] $G$ is $p$-solvable;
 \item[(d)] $G=N_G(Q,f)$ and, one of the following conditions is satisfied:
 \subitem (i) $G/QC_G(Q)$ is non-$p$-perfect;
 \subitem (ii) $G/QC_G(Q)$ has a normal Sylow $p$-subgroup;
 \subitem(iii) $Z(P)$ is not included in $P'$;
 \subitem(iv) the exponent of $P'$ is strictly smaller than the exponent of $P$;
 \subitem(v) $P$ is metacyclic.
\end{itemize}

 Then $\HH^1(B)\neq 0.$
\end{cor}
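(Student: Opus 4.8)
The plan is to reduce each case to an application of Theorem \ref{thm12} applied to a suitable twisted group algebra, using the fact that in all the listed situations the block $B$ is Morita equivalent (in fact, Morita equivalences preserve $\HH^1$) to a twisted group algebra $k_\alpha L$ of a finite group $L$ whose order is divisible by $p$, and then verifying that $L_{C(p)}\cap L_\alpha^\circ\neq\emptyset$. First, I would record the general principle: since $\HH^1$ is invariant under Morita equivalence, it suffices in each case to identify the relevant $L$ and $\alpha$ and to produce an element of $L$ that is simultaneously $\alpha$-regular and gives the commutator index property $C(p)$; by Proposition \ref{propfgC(p)good} (whose five conditions match exactly the conditions (i)--(v) in item (d)), the latter will follow once $L$ is seen to satisfy one of those five structural conditions.

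For case (a), if $P\trianglelefteq G$ then by K\"ulshammer's theorem \cite{Ku1} $B$ is Morita equivalent to a twisted group algebra $k_\alpha(P\rtimes E)$, where $E$ is the inertial quotient, a $p'$-group; since $p\mid|P\rtimes E|$ and $P$ is a normal Sylow $p$-subgroup of $P\rtimes E$, condition (ii) of Proposition \ref{propfgC(p)good} applies, giving $\HH^1(B)\neq0$. For case (b), if $b$ is inertial then $B$ is basic Morita equivalent to its Brauer correspondent, which has normal defect group, so case (b) reduces to case (a). For case (c), if $G$ is $p$-solvable then by K\"ulshammer's theorem \cite{Ku2} $B$ is Morita equivalent to a twisted group algebra $k_\alpha H$ over a $p$-solvable group $H$ with $p\mid|H|$; a $p$-solvable group is in particular non-$p$-perfect (it has a nontrivial $p$-quotient via $O^{p'}$ or a chief-series argument), so condition (i) of Proposition \ref{propfgC(p)good} applies. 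For case (d), the hypothesis $G=N_G(Q,f)$ together with $Q$ being $\mathcal{F}$-centric places us in the setting where $B$ is Morita equivalent to a twisted group algebra of $L:=G/QC_G(Q)\cong \Out$-type quotient acting on... more precisely, by the theory of centric subgroups and Külshammer--Puig, $kN_G(Q,f)\bar f$ is Morita equivalent to a twisted group algebra $k_\alpha(Q\rtimes \bar L)$ or directly to $k_\beta L$ with $L=G/QC_G(Q)$, and the subconditions (i)--(v) are precisely arranged so that $L$ (or $P$ in (iii)--(v), $P$ being a defect group containing $Z(Q)$) satisfies one of the five conditions of Proposition \ref{propfgC(p)good}.

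The main obstacle I expect is bookkeeping the Morita equivalences precisely enough to know \emph{which} twisted group algebra one lands on and hence which finite group $L$ one must check — in particular in case (d), correctly identifying that the relevant group carrying the commutator index property is $G/QC_G(Q)$ (for (i),(ii)) versus the defect group $P$ itself (for (iii),(iv),(v)), and checking that $p$ genuinely divides its order so that Theorem \ref{thm12} is applicable and the $\alpha$-regularity half of the hypothesis is met. The $\alpha$-regularity should be handled uniformly: in the semidirect-product twisted group algebras arising from Külshammer's and Puig's structure theorems the nontrivial $p$-element witnessing $C(p)$ can always be taken inside the (untwisted) defect-group part or chosen $\alpha$-regular by Lemma \ref{lem24}(vi), since there $p$-elements in the relevant section are $\alpha$-regular. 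Once these identifications are made, each case is a direct citation of Theorem \ref{thm12} and Proposition \ref{propfgC(p)good}, so the proof is short modulo this structural dictionary.
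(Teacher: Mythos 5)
Your overall strategy --- reduce each case to a Morita equivalence between $B$ and a twisted group algebra $k_\alpha L$, then verify $L_{C(p)}\cap L_\alpha^\circ\neq\emptyset$ via Proposition \ref{propfgC(p)good} and conclude with Theorem \ref{thm12} --- is exactly the paper's route (packaged there as Proposition \ref{prop-iner-solv}), and your cases (a) and (b) match the paper. However, case (c) contains a genuine error: a $p$-solvable group need \emph{not} be non-$p$-perfect. For instance $A_4$ is solvable, hence $2$-solvable, but $|A_4:A_4'|=3$, so $\mathcal{O}^2(A_4)=A_4$ and $\Hc^1(A_4,\mathbb{F}_2)=0$; the normal series of a $p$-solvable group has factors that are $p$-groups or $p'$-groups, but nothing forces the top one to be a $p$-group. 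So condition (i) of Proposition \ref{propfgC(p)good} does not apply to the $p$-solvable group $L$ furnished by K\"ulshammer's theorem, and your shortcut fails. The paper instead proves, by a minimal-counterexample argument adapted from \cite{FJL} (passing to quotients $L_0/N$ when $p$ divides $|L_0/N|$ and to normal subgroups $N$ otherwise, until one reaches a finite simple $p$-solvable group, which must be $C_p$), that every finite $p$-solvable group of order divisible by $p$ satisfies the strong Non-Schur property $S(p)$; then Lemma \ref{lem24} (vi) gives $L_{C(p)}\cap L_\alpha^\circ\neq\emptyset$. This argument is an essential ingredient that your proposal replaces with a false claim.

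Case (d) is also too vague to count as a proof. The correct structural input is \cite[Corollary 8.12.9]{LiBo2}: under $G=N_G(Q,f)$ with $Q$ an $\mathcal{F}$-centric subgroup, $B$ is Morita equivalent to $k_\alpha L$ where $Q\trianglelefteq L$, $L/Q\cong G/QC_G(Q)$ and $P$ is a Sylow $p$-subgroup of $L$ --- so $L$ is an extension of $G/QC_G(Q)$ by $Q$, not $G/QC_G(Q)$ itself and not necessarily a semidirect product. For subcase (i) one must still pass non-$p$-perfectness from the quotient $L/Q$ up to $L$ (inflation--restriction does this); for (ii) normality of a Sylow $p$-subgroup of $L/Q$ lifts to $L$; and for (iii)--(v) the whole point is that $P$ is a Sylow $p$-subgroup of $L$, so Proposition \ref{propfgC(p)good} (iii)--(v) apply verbatim. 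These are small steps, but your write-up does not pin down which group $L$ is, and under one of your two guesses ($L=G/QC_G(Q)$) conditions (iii)--(v) would not even be statements about a Sylow $p$-subgroup of $L$.
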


It is well known that a nilpotent block is basic Morita equivalent to its defect group algebra and has nilpotent Brauer corespondent.  It follows that nilpotent blocks are inertial (see \cite[1.5]{PuZh} ), hence for nilpotent blocks Question \ref{question} has positive answer. The proof of Corollary \ref{cor13} is a consequence Proposition \ref{prop-iner-solv}.

Recall that a finite group $L$ with a Sylow $p$-subgroup $D$ is called $p$-perfect if $L=
\mathcal{O}^p(L)$. It is well known that  this definition is equivalent to $\Hc^1(L,\mathbb{F}_p)=0$ which, by 
some results reminded in Section \ref{sec2-reminder},
is the same as $\Hc^1(L,k)=0$. For shortness, sometimes, we will say that a  group $L$  is non-$p$-perfect if $\Hc^1(L,k)\neq 0$,
equivalently $\mathcal{O}^p(L)<L$. For any  $p$-group $Q$ of the finite group $G$, the Scott module $Sc(G,Q)$ is the unique (up to isomorphism) indecomposable
$kG$-module with vertex $Q$, with trivial source and having a submodule isomorphic to $k$
as trivial $kG$-module. In the second main result of this paper we present other blocks for which Question \ref{question} has 
positive answer. 


\begin{thm}\label{thm14} Let $B=kGb$ be a block algebra with nontrivial defect group $P$.
If $G$ is non-$p$-perfect and the Scott module $Sc(G,P)$ is isomorphic with the trivial $kG$-module $k$ then $\HH^1(B)\neq 0$.

 \end{thm}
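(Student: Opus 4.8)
The plan is to identify $\HH^1(B)$ with a suitable group cohomology space and then to split off a copy of $\Hc^1(G,k)$ from it; since $G$ is non-$p$-perfect we have $\Hc^1(G,k)\neq 0$ (as recalled just above the statement), so this yields $\HH^1(B)\neq 0$.

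First I would pass from Hochschild cohomology to group cohomology of a conjugation module. Let $(kGb)_{\mathrm{ad}}$ denote $kGb$ viewed as a left $kG$-module via $g\cdot x=gxg^{-1}$. Because $kG=\Ind_{\Delta G}^{G\times G}k$ as a $k[G\times G]$-module and $kGb$ is a direct summand of $kG$ lying in a single block of $k[G\times G]$ (namely $b\otimes b^{\mathrm{op}}$), the Eckmann--Shapiro lemma gives
\[\HH^*(kGb)=\Ext^*_{k[G\times G]}(kGb,kGb)=\Ext^*_{k[G\times G]}(kG,kGb)=\Hc^*(G,(kGb)_{\mathrm{ad}}).\]
Thus it suffices to show that the trivial $kG$-module $k$ occurs as a direct summand of $(kGb)_{\mathrm{ad}}$: then $\Hc^1(G,k)$ is a direct summand of $\Hc^1(G,(kGb)_{\mathrm{ad}})\cong\HH^1(B)$, and the latter is nonzero.

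Next I would use the Scott module hypothesis. By Higman's criterion $k$ is a direct summand of $\Ind_P^G k=k[G/P]$ precisely when $[G:P]$ is invertible in $k$, i.e. precisely when $P$ is a Sylow $p$-subgroup of $G$; since $Sc(G,P)$ is the indecomposable summand of $k[G/P]$ with $k$ in its socle, the assumption $Sc(G,P)\cong k$ forces $P\in\mathrm{Syl}_p(G)$, so $B$ is a block of full defect. Hence $b$ contains an ordinary irreducible character $\chi$ of height zero (every block does), and then $p\nmid\chi(1)$. The reduction modulo $p$ of the class function $\chi$ restricts to a homomorphism of $kG$-modules $\bar\chi\colon(kGb)_{\mathrm{ad}}\to k$, and $\bar\chi(b)=\overline{\chi(1)}\in k^\times$ because $b$ acts as the identity on the $\chi$-representation. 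On the other hand, $b$ is central in $kG$, hence conjugation-fixed, so $\iota\colon k\to(kGb)_{\mathrm{ad}}$, $1\mapsto b$, is a homomorphism of $kG$-modules; since $\bigl((\overline{\chi(1)})^{-1}\bar\chi\bigr)\circ\iota=\Id_k$, the module $k$ splits off $(kGb)_{\mathrm{ad}}$, and the argument is complete.

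The only non-formal point is the construction of the splitting of $(kGb)_{\mathrm{ad}}$ onto $k$; this is exactly where the Scott module hypothesis---equivalently, full defect of $b$---enters, through the existence of a height-zero character. Finally, I note that the same computation produces an explicit non-inner derivation of $B$: pick a non-trivial group homomorphism $\psi\colon G\to\mathbb{F}_p$, which exists since $\mathcal{O}^p(G)<G$; the ``weight'' derivation $d_\psi$ of $kG$ defined by $d_\psi(g)=\psi(g)g$ restricts to a derivation of $B=kGb$ (one checks $d_\psi(b)\in kGb$), and under the split monomorphism $\iota_*\colon\Hc^1(G,k)\hookrightarrow\Hc^1(G,(kGb)_{\mathrm{ad}})=\HH^1(B)$ its class is the image of the nonzero class $[\psi]$, hence nonzero.
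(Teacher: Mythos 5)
Your proof is correct, and while its overall skeleton matches the paper's (identify $\HH^1(B)$ with $\Hc^1(G,B)$ for the conjugation action, split off a copy of $\Hc^1(G,k)$, and use non-$p$-perfectness to get $\Hc^1(G,k)\neq 0$), the key step is obtained by a genuinely different mechanism. The paper quotes the Kessar--Linckelmann decomposition \cite[Lemma 7]{KeLi}, $B\cong Sc(G,P)\oplus\bigl(\bigoplus_i M_i\bigr)$ as conjugation module, and then simply substitutes the hypothesis $Sc(G,P)\cong k$ to obtain the trivial summand. You instead first observe that $Sc(G,P)\cong k$ forces $P$ to be a Sylow $p$-subgroup (quickest: $Sc(G,P)$ has vertex $P$ while $k$ has vertex a Sylow $p$-subgroup, which also shows the hypothesis is in fact \emph{equivalent} to $b$ having full defect), and then build the splitting $k\to B\to k$, $1\mapsto b$, by hand using a height-zero ordinary character $\chi$ of $b$, so that $p\nmid\chi(1)$ and $(\overline{\chi(1)})^{-1}\bar\chi$ is a retraction. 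This buys a self-contained argument that bypasses \cite{KeLi} entirely, and it yields an explicit split copy of $\Hc^1(G,k)$ inside $\HH^1(B)$, realized by the ``weight'' derivations $d_\psi$ (your parenthetical check is fine: in fact $d(b)=0$ for any derivation and any central idempotent, consistently with $b$ being supported on $p$-regular elements on which $\psi$ vanishes). What the paper's route buys is that it stays entirely over $k$ and within the Scott-module framework used elsewhere in Section~4, whereas your construction implicitly invokes a $p$-modular system $(\Od,K,k)$: to make sense of $\bar\chi(b)=\overline{\chi(1)}$ one should lift $b$ to a block idempotent of $\Od G$ and note that $\chi$ takes values in $\Od$; this, the existence of a height-zero character in every block, and the labelling of the summand criterion as ``Higman's criterion'' are standard points, but worth a sentence each if written up.
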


 In Section \ref{sec3} we prove Theorem \ref{thm12} and Theorem \ref{thm14}. Section \ref{sec4} is devoted
 to the investigation of blocks which satisfies the assumptions of Theorem \ref{thm12} and \ref{thm14}. We end 
 this paper with examples of blocks which satisfy
 the assumptions of Theorem \ref{thm14}.
 
 We need to emphasize that although the proofs of the main results are not difficult,  we were able to   identify a sufficient
and clearly useful criterion for the nonvanishing of the first Hochschild cohomology  of twisted
group algebras in terms of properties of the twisting
$2$-cocycle. Also,  there are only few published results
on showing that the first Hochschild cohomology of a block with a nontrivial defect group is
nonzero.
 
\section{Reminder of two cocycles, the first group cohomology and Hochschild cohomology}\label{sec2-reminder}
In this section, if otherwise is not specified, $k$ is any field and $M$ is a trivial
$kL$-module, where $L$ is a finite group. It is well known that
$$\Hc^*(L,k)\otimes_k M\cong \Hc^*(L,M).$$
This isomorphism can be easily described using the Universal Coefficient Theorem and,
in particular we obtain
\begin{equation}\label{eq21}\Hc^1(L,M)\cong\Hc^1(L,k)\otimes_k M.
\end{equation}
Since we work with trivial $kL$-modules we have explicit 
identifications of vector spaces,
see \cite[Theorem 3.4.1]{Ca}
\begin{equation}\label{eq22}
\Hc^1(L,M)\cong \Hom_{Grp}((L,\cdot), (M,+)), \quad \Hc^1(L,k)\cong \Hom_{Grp}((L,\cdot), (k,+)).
\end{equation}
In the  case of the extension of fields $\mathbb{F}_p\leq k$, with $k$ 
a field
of characteristic $p$, by taking $\mathbb{F}_p$ the trivial $\mathbb{F}_pL$-module
and $k$ the trivial $kL$-module we get
\begin{equation}\label{eq23}
 \Hc^1(L,k)\cong\Hc^1(L,\mathbb{F}_p)\otimes_{\mathbb{F}_p}k
\end{equation}
The compatibility of cohomology and flat scalar extensions (of which
the following  proposition  is a special case) is well known. We leave the proof for the reader and just mention that, for the next
 statement (ii), one approach is to use statement (i) and centralizers decomposition of Hochschild cohomology of group algebras
 $$\HH^1(kL)\cong \Hc^1(L,kL)\cong \bigoplus_{g\in X}\Hc^1(C_L(g),k),$$
 where $X$ is a system of representatives of the conjugacy classes in $L$.
\begin{prop}\label{prop21H1}
 Let  $M$ be a trivial $kL$-module.
 \begin{itemize}
  \item[(i)] $\Hc^1(L,M)\neq 0$ if and only if $\Hc^1(L,k)\neq 0$. Particularly,
  from (\ref{eq23}), we obtain \\ $\Hc^1(L,k)\neq 0$ if and only if $\Hc^1(L,\mathbb{F}_p)\neq 0.$
  \item[(ii)] $\HH^1(kL)\neq 0$ if and only if $\HH^1(\mathbb{F}_p L)\neq 0$.
 \end{itemize}

\end{prop}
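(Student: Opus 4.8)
The plan is to deduce the statement formally from the isomorphism (\ref{eq21}), its $\mathbb{F}_p$-analogue (\ref{eq23}), and the centralizer decomposition of $\HH^1$ recalled just above the statement; no new ingredient is needed.

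For part (i) I would argue as follows. Assume $M\neq 0$ (this is the only case of interest, and the equivalence is to be read with $M$ nonzero). By (\ref{eq21}) we have $\Hc^1(L,M)\cong\Hc^1(L,k)\otimes_k M$, and a tensor product of two nonzero vector spaces over a field is again nonzero; hence $\Hc^1(L,M)=0$ precisely when $\Hc^1(L,k)=0$, which is the first assertion. For the ``particularly'' clause, apply this equivalence with the ground field $\mathbb{F}_p$ in place of $k$ and with the \emph{nonzero} trivial $\mathbb{F}_pL$-module $M:=k$: the resulting isomorphism is exactly (\ref{eq23}), and since $k\neq 0$ as an $\mathbb{F}_p$-space we conclude that $\Hc^1(L,k)\neq 0$ if and only if $\Hc^1(L,\mathbb{F}_p)\neq 0$.

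For part (ii) I would invoke the two centralizer decompositions
\[
\HH^1(kL)\cong\bigoplus_{g\in X}\Hc^1(C_L(g),k),\qquad
\HH^1(\mathbb{F}_pL)\cong\bigoplus_{g\in X}\Hc^1(C_L(g),\mathbb{F}_p),
\]
indexed by one and the same system $X$ of representatives of the conjugacy classes of $L$. Part (i), applied to each subgroup $C_L(g)$ with $M=k$ regarded as a trivial $\mathbb{F}_pC_L(g)$-module, gives $\Hc^1(C_L(g),k)\neq 0$ if and only if $\Hc^1(C_L(g),\mathbb{F}_p)\neq 0$. Since a direct sum of vector spaces is nonzero exactly when at least one summand is nonzero, the two left-hand sides vanish simultaneously, which is the claim.

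There is no substantive obstacle here: the argument is purely formal, and the only points deserving a line of justification are (a) that the hypothesis $M\neq 0$ is genuinely needed in (i), and (b) the centralizer decomposition of $\HH^1(kL)$, which one gets from the bimodule decomposition $kL\cong\bigoplus_{g\in X}\Ind_{\Delta C_L(g)}^{\Delta L}k$ together with the Eckmann--Shapiro lemma for $\Ext$. Alternatively, part (ii) follows directly from free base change along $\mathbb{F}_p\hookrightarrow k$, which yields $\HH^1(kL)\cong\HH^1(\mathbb{F}_pL)\otimes_{\mathbb{F}_p}k$, and then one again only needs that $k\neq 0$ over $\mathbb{F}_p$; this is precisely the ``flat scalar extension'' compatibility mentioned before the statement.
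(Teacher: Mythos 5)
Your proposal is correct and follows exactly the route the paper indicates: part (i) from the tensor decomposition (\ref{eq21})/(\ref{eq23}) (a tensor product of nonzero vector spaces over a field is nonzero), and part (ii) from part (i) together with the centralizer decomposition of $\HH^1(kL)$, which is precisely the hint the paper gives before leaving the proof to the reader. Your explicit remark that $M\neq 0$ must be assumed in (i) is a worthwhile precision the paper omits.
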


 A $2$-cocycle $\alpha\in Z^2(L,k^{\times})$, where $L$ acts trivially on $k^{\times}$, is a map $\alpha:G\times G\rightarrow k^{\times}$
satisfying
 \begin{equation}\label{eq25}
  \alpha(xy,z)\alpha (x,y)=\alpha(x,yz)\alpha(y,z),\forall x,y,z\in L.
 \end{equation}

 It is well known that 
 \begin{equation}\label{eq26}
 \alpha(1,x)=\alpha(x,1)=\alpha(1,1), \forall x\in L.
 \end{equation}

The following lemma shows that $2$-cocycles behave well with respect to cyclic
groups. 
\begin{lem}\label{lemcyclegrp}
 Let $\alpha\in Z^2(L, k^{\times})$, $x\in L$ and let $m,n$ be any integers.
Then $\alpha(x^m,x^n)=\alpha(x^n,x^m).$ 
\end{lem}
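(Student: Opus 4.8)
The plan is to reduce everything to the single-variable cocycle identity that governs powers of a fixed element $x$. First I would establish, by induction on $n \geq 1$, the formula
\begin{equation*}
\alpha(x, x^{n}) \,\alpha(x^{2}, x^{n-1}) \cdots \alpha(x^{n}, x) \;=\; \alpha(x,x)\,\alpha(x,x^{2}) \cdots \alpha(x, x^{n})
\end{equation*}
or, more usefully, the telescoping consequence that the quantity $\prod_{i=1}^{n} \alpha(x^{i}, x^{n+1-i})$ can be computed in two ways; the cleaner route is to apply the $2$-cocycle identity (\ref{eq25}) with the substitution $(x^{a}, x^{b}, x^{c})$, which gives $\alpha(x^{a+b}, x^{c})\,\alpha(x^{a},x^{b}) = \alpha(x^{a}, x^{b+c})\,\alpha(x^{b}, x^{c})$ for all nonnegative integers $a,b,c$. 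The symmetry $\alpha(x^{m}, x^{n}) = \alpha(x^{n},x^{m})$ for $m, n \geq 1$ then follows by a short induction: assuming it for smaller exponents, write $\alpha(x^{m}, x^{n})$ using the identity with $(x^{m-1}, x, x^{n})$ and $\alpha(x^{n}, x^{m})$ using $(x^{n}, x^{m-1}, x)$, and match terms using the inductive hypothesis together with the base case $\alpha(x^{m},x)$ versus $\alpha(x, x^{m})$, which itself is handled by the same argument reducing to $\alpha(x,x)$.

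Next I would remove the restriction to positive exponents. For this I need to understand $\alpha(x^{m}, x^{n})$ when $x$ has finite order $d$, since then $x^{m}$ for arbitrary integers $m$ only depends on $m \bmod d$, and when $x$ has infinite order the statement is already covered by the positive-exponent case after noting that negative powers reduce to the identity element $1$ only trivially — but in fact in a finite group $L$ every element has finite order, so $d = \operatorname{ord}(x)$ is finite and we may always assume $0 \leq m, n < d$. The exponent $0$ cases are exactly (\ref{eq26}): $\alpha(x^{0}, x^{n}) = \alpha(1, x^{n}) = \alpha(1,1) = \alpha(x^{n}, 1) = \alpha(x^{n}, x^{0})$. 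So it remains only to confirm that replacing $m$ by a representative in $\{0, \dots, d-1\}$ is legitimate, i.e. that $\alpha(x^{m}, x^{n})$ genuinely depends only on the residues; this is automatic because $x^{m}$ as a group element depends only on $m \bmod d$, and $\alpha$ is a function of group elements.

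The main obstacle is bookkeeping in the induction of the first step: the cocycle identity relates a product of two $\alpha$-values to another product of two, so one must be careful to set up the induction so that exactly one ``new'' factor appears on each side and everything else cancels against the inductive hypothesis. A clean way to organize this is to prove first the auxiliary identity $\alpha(x^{m}, x) = \alpha(x, x^{m})$ for all $m \geq 1$ (one variable is fixed at $x$, so the cocycle identity with $(x^{m-1}, x, x)$ immediately gives a recursion that telescopes), and only then bootstrap to general $\alpha(x^{m}, x^{n})$ by inducting on $n$ with $(x^{m}, x^{n-1}, x)$ against $(x^{n-1}, x^{m}, x)$ — here the previously established symmetry in the second-slot-equals-$x$ case plus the inductive hypothesis in $n$ close the argument. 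I expect no genuine difficulty beyond this combinatorial care, since abelianness of the cyclic subgroup $\langle x \rangle$ is exactly what makes all the conjugation-type obstructions vanish.
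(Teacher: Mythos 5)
Your proof is correct in strategy, but it takes a genuinely different route from the paper. The paper disposes of the lemma in one line of cohomological language: the restriction of $\alpha$ to the cyclic subgroup $\langle x\rangle$ is a $2$-coboundary because $\Hc^2(\langle x\rangle,k^{\times})\cong k^{\times}/(k^{\times})^{d}$ vanishes when $k$ is algebraically closed, and a coboundary $\mu(u)\mu(v)\mu(uv)^{-1}$ is manifestly symmetric in $u,v$ on an abelian group. Your argument instead grinds out the symmetry directly from the cocycle identity (\ref{eq25}) by induction on the exponents. What your approach buys is generality and elementarity: it uses only (\ref{eq25}) and (\ref{eq26}), so it proves the statement for cocycles valued in an arbitrary abelian coefficient group, with no divisibility or algebraic-closure hypothesis on $k^{\times}$; the paper's one-line justification genuinely needs $k^{\times}$ to be divisible. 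What the paper's approach buys is brevity and a conceptual explanation (the symmetry defect $\alpha(u,v)\alpha(v,u)^{-1}$ is the commutator pairing of the central extension, which vanishes on a cyclic group). Your reduction of arbitrary integer exponents to $\{0,\dots,d-1\}$ via finiteness of the order of $x$, and the exponent-$0$ case via (\ref{eq26}), are both fine.

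One concrete adjustment: the specific substitutions you name do not all close the induction as stated. The triple $(x^{m-1},x,x)$ gives $\alpha(x^{m},x)\alpha(x^{m-1},x)=\alpha(x^{m-1},x^{2})\alpha(x,x)$, which drags in the two-general-exponent term $\alpha(x^{m-1},x^{2})$ and therefore does not telescope within the family $\alpha(x^{j},x)$. The substitution that does work for your auxiliary identity is $(x,x^{m-1},x)$, which yields $\alpha(x^{m},x)\,\alpha(x,x^{m-1})=\alpha(x,x^{m})\,\alpha(x^{m-1},x)$ and telescopes immediately. Similarly, in the bootstrap step, pairing $(x^{m},x^{n-1},x)$ with $(x,x^{n-1},x^{m})$ (rather than with $(x^{n-1},x^{m},x)$) closes the induction on $n$ using only the case $n-1$ and the auxiliary identity. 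With those two replacements your plan goes through exactly as you describe; this is precisely the ``combinatorial care'' you flagged, so I regard it as a repairable bookkeeping slip rather than a gap.
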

The above result is  well known by experts. One justification is based on the fact that restriction of
a $2$-cocycle to  the cyclic group $<x>$ becomes a $2$-coboundary in
$k$ (which is algebraically closed), and $2$-coboundaries of any abelian 
group are clearly symmetric in the two arguments. 

One of the main goals in this paper is the searching for finite 
groups $L$ which have an $\alpha$-regular element $x\in L$ such that $\Hc^1(C_L(x),k)
\neq 0$. For this we collect and recall some basic facts about $\alpha$-regular elements
and about $L_{C(p)}$. The following lemma is based on arguments which we find in \cite[Chapter 3, Lemma 6.1]{Ka}
and \cite[Lemma 1.2]{FJL}.
\begin{lem}\label{lem24}
  Let $L$ be a finite group with order divisible by $p$ and $\alpha\in Z^2(L,k^{\times})$. 
\begin{itemize}
 \item[(i)]  $1\in L_{\alpha}^{\circ}$;
 \item[(ii)] If $L$ is abelian then $L_{C(p)}=L$;
 \item[(iii)]If $L$ is cyclic then $L_{C(p)}=L=L_{\alpha}^{\circ}$;
 \item[(iv)] If $L$ is a $p$-group then $L_{C(p)}=L=L_{\alpha}^{\circ}$;
 \item[(v)] $L$ is non-$p$-perfect if and only if $1\in L_{C(p)}$;
 \item[(vi)] $L_{S(p)}\subseteq L_{C(p)}\cap L_{\alpha}^{\circ}$; 
 \item[(vii)] If $P$ is a normal Sylow $p$-subgroup of $L$ then
 $$\emptyset\neq P\setminus P'\subset L_{S(p)}.$$
\end{itemize}
\end{lem}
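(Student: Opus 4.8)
The plan is to dispatch the seven items essentially in the order listed; most are matters of unwinding definitions, and the genuine content is concentrated in the $\alpha$-regularity assertions and in part (vii). For (i), formula (\ref{eq26}) gives $\alpha(1,y)=\alpha(1,1)=\alpha(y,1)$ for every $y$, so $1$ is $\alpha$-regular. For (ii), if $L$ is abelian then $C_L(x)=L$ and $(C_L(x))'=1$ for every $x$, whence $|C_L(x):(C_L(x))'|=|L|$ is divisible by $p$, so $L_{C(p)}=L$. For (iii), $L_{C(p)}=L$ follows from (ii) since a cyclic group is abelian, and $L_\alpha^\circ=L$ because any two elements of $L=\langle g\rangle$ are powers of $g$, so $\alpha$ is symmetric on them by Lemma \ref{lemcyclegrp}. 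For (v), by (\ref{eq22}) we have $\Hc^1(L,k)\cong\Hom_{Grp}(L,(k,+))$, and since $(k,+)$ is an $\mathbb{F}_p$-vector space this is nonzero exactly when $p$ divides $|L:L'|=|C_L(1):(C_L(1))'|$, i.e. exactly when $1\in L_{C(p)}$; and $\Hc^1(L,k)\neq 0$ is by definition what it means for $L$ to be non-$p$-perfect.

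The observation I would isolate next, which drives both (iv) and (vi), is that \emph{every $p$-element $x\in L$ lies in $L_\alpha^\circ$}. One checks, from the cocycle identity (\ref{eq25}) and the fact that $y,z$ commute with $x$, that the map $\mu_x\colon C_L(x)\to k^\times$, $\mu_x(y)=\alpha(x,y)\alpha(y,x)^{-1}$, is a group homomorphism, so $x$ is $\alpha$-regular precisely when $\mu_x\equiv 1$. In the twisted group algebra $k_\alpha C_L(x)$, conjugation by the basis unit $u_x$ sends $u_y$ to $\mu_x(y)u_y$; since $x$ has order $p^n$ the element $u_x^{p^n}$ is a scalar, hence $\mu_x(y)^{p^n}=1$ for all $y$, and in characteristic $p$ this forces $\mu_x\equiv 1$. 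Granting this, (iv) is immediate: in a $p$-group every element is a $p$-element and thus $\alpha$-regular, so $L_\alpha^\circ=L$, while $L_{C(p)}=L$ because $C_L(x)$ is a nontrivial nilpotent $p$-group with proper derived subgroup. For (vi): if $x\in L_{S(p)}$ then $x\in L_\alpha^\circ$ by the above, and the image of the $p$-element $x$ in $C_L(x)/(C_L(x))'$ is nontrivial by hypothesis, so $p$ divides $|C_L(x):(C_L(x))'|$ and $x\in L_{C(p)}$.

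For (vii), $P\neq 1$ is a $p$-group, so $P'<P$ and $P\setminus P'\neq\emptyset$, and each $x\in P\setminus P'$ is a $p$-element; the only thing to prove is $x\notin(C_L(x))'$. Replacing $L$ by $L/P'$ reduces to the case $P$ abelian — the image of $x$ stays nontrivial in the normal Sylow $p$-subgroup $P/P'$, and a relation $x\in(C_L(x))'$ would pass to the quotient. When $P$ is abelian one has $P\leq C_L(x)$, so by Schur--Zassenhaus $C_L(x)=P\rtimes H$ with $H$ a $p'$-group; since $P/[P,H]$ is central in $C_L(x)/[P,H]$ and has order prime to $|H|$, that central extension splits as a direct product, which gives $(C_L(x))'\cap P\subseteq[P,H]$. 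On the other hand $x$, being central in $C_L(x)$, lies in $C_P(H)$ and is nontrivial, and the coprime-action decomposition $P=C_P(H)\oplus[P,H]$ then yields $x\notin[P,H]$, hence $x\notin(C_L(x))'$ and $x\in L_{S(p)}$.

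The routine items are (i)--(iii) and (v). The first place where an actual idea is needed is the $\alpha$-regularity of $p$-elements used in (iv) and (vi) (equivalently, one could there invoke $\Hc^2(L,k^\times)=0$ for a $p$-group over a perfect field of characteristic $p$). I expect (vii) to be the main obstacle: the delicate point is controlling $(C_L(x))'\cap P$, and it is the reduction to abelian $P$ together with the splitting of the relevant central extension and Fitting's coprime-action decomposition that makes the argument close.
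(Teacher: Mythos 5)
Your proof is correct, but in several places it takes a genuinely different and more self-contained route than the paper. For (i)--(iii) you do exactly what the paper does (formula (\ref{eq26}), the abelian computation, and Lemma \ref{lemcyclegrp}). For the $\alpha$-regularity statements in (iv) and (vi), the paper simply cites Karpilovsky (Chapter 3, Lemma 6.1) and \cite[Lemma 1.2 (4)]{FJL}, whereas you prove directly that every $p$-element is $\alpha$-regular via the homomorphism $\mu_x(y)=\alpha(x,y)\alpha(y,x)^{-1}$ and the fact that $p^n$-th roots of unity are trivial in characteristic $p$ (note this, like the cited lemma, uses $\operatorname{char}k=p$, which is the standing assumption). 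For (v) the paper argues through the focal/hyperfocal subgroup theorem to pass from $p\mid|L:L'|$ to $\mathcal{O}^p(L)<L$; you instead compute $\Hom_{Grp}(L,(k,+))$ directly and lean on the equivalence between $\mathcal{O}^p(L)<L$ and $\Hc^1(L,k)\neq 0$, which the paper records as well known in the introduction, so this is legitimate though it shifts where the focal-subgroup content is hidden. The most substantial divergence is (vii): the paper only verifies $x\in C_P(x)\setminus(C_P(x))'$ and then invokes \cite[Lemma 1.2 (2)]{FJL} (in essence a transfer argument) to conclude $x\notin(C_L(x))'$, while you give a complete transfer-free proof by reducing modulo $P'$, splitting $C_L(x)=P\rtimes H$ by Schur--Zassenhaus, showing $(C_L(x))'\cap P\subseteq[P,H]$ via the splitting of the coprime central extension, and finishing with Fitting's decomposition $P=C_P(H)\times[P,H]$; this buys independence from the cited FJL lemma at the cost of a longer argument, and both routes are valid.
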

\begin{proof}
 \begin{itemize}
  \item[(i)] This statement is evident by (\ref{eq26}).
  \item[(ii)] If $L$ is abelian then for any $x\in L$ we have
  $$C_L(x)/(C_L(x))'=L/L'=L$$
  which has order divisible by $p$.
  \item[(iii)] The first equality is true by (ii) and the second equality follows from
  Lemma \ref{lemcyclegrp}.
  \item[(iv)] The first equality is evident while the second equality follows 
  from \cite[Chapter 3, Lemma 6.1 (iv)]{Ka}.
  \item[(v)] The identity element is in  $L_{C(p)}$ if and only if $p$ divides the index $|L:L'|$
   which, by \cite[Lemma 1.2 (3) (a)]{Ka}, is true if and only if $$\mathrm{foc}_L(S_p(1))<S_p(1),$$
   where $S_p(1)$ is a Sylow $p$-subgroup in $L$. But this last statement is true if and only if the 
   hyperfocal subgroup of $L$ is a proper subgroup of $S_p(1)$. This is the same to 
   $$\mathcal{O}^p(L)\cap S_p(1)<S_p(1)$$ which is true if and only if $\mathcal{O}^p(L)<L$. 
 \item[(vi)] The proof of this statement is clear by \cite[Lemma 1.2 (4)]{FJL} and 
 \cite[Chapter 3, Lemma 6.1]{Ka}.
 \item[(vii)] Since $P$ is a $p$-group, it is well known 
(by induction) that $P'<P$, so there is $x\in P\setminus P'$, which is 
obviously a $p$-element. But $P$ is a normal Sylow $p$-subgroup in $L$, hence 
$C_P(x)$ remains a normal Sylow $p$-subgroup of $C_{L}(x)$. 
Then, our $p$-element $x$ satisfies the property $$x\in C_P(x)\setminus (C_P(x))'.$$
By  \cite[Lemma 1.2 (2)]{FJL} it follows that $L$ satisfies the strong Non-Schur 
property $S(p)$ and, in fact,  we obtain
$$P\setminus P'\subset L_{S(p)}.$$ 
 \end{itemize}

\end{proof}

\begin{prop}\label{propfgC(p)good}
 Let $L$ be a finite group with order divisible by $p$ and $\alpha\in Z^2(L,k^{\times})$. 
Let $P$ be a Sylow $p$-subgroup in $L$. Assume that one of the following statements is true:
 \begin{enumerate}
     \item[(i)] $L$ is non-$p$-perfect;
     \item[(ii)] $P$ is normal in $L$;
     \item[(iii)] $Z(P)$ is not included in $P'$;
     \item[(iv)] the exponent of $P'$ is strictly smaller than the exponent of $P$;
     \item[(v)] $P$ is metacyclic.
 \end{enumerate}
 Then $L_{C(p)}\cap L_{\alpha}^{\circ}\neq \emptyset$.
\end{prop}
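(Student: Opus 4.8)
The plan is to treat each of the five cases separately, reducing each to producing an element of $L_{C(p)} \cap L_{\alpha}^{\circ}$ by exhibiting a concrete element. For case (i), if $L$ is non-$p$-perfect then Lemma \ref{lem24}(v) gives $1 \in L_{C(p)}$, and Lemma \ref{lem24}(i) gives $1 \in L_{\alpha}^{\circ}$, so $1$ lies in the intersection. For case (ii), if $P$ is a normal Sylow $p$-subgroup of $L$, then Lemma \ref{lem24}(vii) yields $\emptyset \neq P \setminus P' \subseteq L_{S(p)}$, and then Lemma \ref{lem24}(vi) gives $L_{S(p)} \subseteq L_{C(p)} \cap L_{\alpha}^{\circ}$, so any element of $P \setminus P'$ works.

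For cases (iii), (iv), (v) the strategy is the same: produce a $p$-element $x$ with $x \notin C_L(x)'$, i.e.\ an element of $L_{S(p)}$, and then invoke Lemma \ref{lem24}(vi). The natural candidate is to take $x \in Z(P)$, so that $P \leq C_L(x)$ and hence $P$ is a Sylow $p$-subgroup of $C_L(x)$; one then needs $x \notin C_L(x)'$. Since $C_L(x)'$ is generated by commutators in $C_L(x)$ and its Sylow $p$-part is controlled by $P' $ together with $p'$-contributions, the key reduction is: if $x \in Z(P)$ and $x \notin P'$, and moreover $x$ does not become a product of commutators once we adjoin the $p'$-elements of $C_L(x)$, then $x \in L_{S(p)}$. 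In case (iii), picking $x \in Z(P) \setminus P'$ directly: the obstruction is that $x$ might still lie in $C_L(x)'$ even though $x \notin P'$, so one argues (via the focal/hyperfocal machinery referenced in Lemma \ref{lem24}(v), applied to $C_L(x)$ rather than $L$, or via a transfer argument) that the image of $x$ in $C_L(x)/O^p(C_L(x))$ is nonzero. In case (iv), if $\exp(P') < \exp(P)$, choose $x \in P$ of maximal order; then $x$ cannot lie in $P'$ (wrong exponent) and, more robustly, the order obstruction persists under passing to $C_L(x)$, so $x \in L_{S(p)}$ — here one should first replace $x$ by a suitable power or conjugate to arrange $x \in Z(P)$ while keeping its order witnessing the exponent gap. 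In case (v), if $P$ is metacyclic, one uses the structure of metacyclic $p$-groups to locate an element outside $P'$ that also survives in $C_L(x)/O^p(C_L(x))$; alternatively this case is known to follow from results on metacyclic defect groups (cf.\ the literature on hyperfocal subgroups), and one cites the relevant fact that a metacyclic $p$-group $P$ always has an element of $P \setminus \mathrm{foc}$-type that lifts appropriately.

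The main obstacle, in all of (iii)--(v), is exactly the passage from ``$x \notin P'$'' to ``$x \notin C_L(x)'$'': the derived subgroup of the full centralizer can be strictly larger on its $p$-part contribution than $P'$ because of fusion coming from $p'$-elements of $C_L(x)$. The cleanest way around this is to work not with $P'$ but with the hyperfocal subgroup $O^p(C_L(x)) \cap P$ of $C_L(x)$, as in the proof of Lemma \ref{lem24}(v): one needs $x \notin O^p(C_L(x)) \cap P$, equivalently $x$ has nontrivial image in $C_L(x)/O^p(C_L(x)) \cong \Hc^1(C_L(x), \mathbb{F}_p)^\vee$, which by Proposition \ref{prop21H1} is exactly the condition $\Hc^1(C_L(x),k) \neq 0$ witnessed by $x$. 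So each case becomes: choose $x \in Z(P)$ (a $p$-element centralized by all of $P$) such that $C_L(x)$ is non-$p$-perfect with the nontriviality detected on $\langle x \rangle$; conditions (iii), (iv), (v) are precisely designed to guarantee such an $x$ exists, and one verifies this using the corresponding structural property of $P$ together with the fact that $P$ remains Sylow in $C_L(x)$.
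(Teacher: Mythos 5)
Your treatment of cases (i) and (ii) is exactly the paper's: (i) follows from Lemma \ref{lem24} (i) and (v), and (ii) from Lemma \ref{lem24} (vii) and (vi). Your overall reduction for (iii)--(v) --- produce an element of $L_{S(p)}$ and then apply the inclusion $L_{S(p)}\subseteq L_{C(p)}\cap L_{\alpha}^{\circ}$ of Lemma \ref{lem24} (vi) --- is also precisely the route the paper takes. The difference is that the paper disposes of (iii), (iv) and (v) in one line by citing \cite[Lemma 1.2 (2)]{FJL}, which states outright that each of these three conditions on a Sylow $p$-subgroup forces the strong Non-Schur property $S(p)$; it does not reprove that fact.

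Where you depart from the paper is in attempting to reconstruct the proof of that cited lemma, and that reconstruction has genuine gaps. You correctly identify the crux --- passing from ``$x\notin P'$'' to ``$x\notin C_L(x)'$'' --- but you never close it: ``one argues via the focal/hyperfocal machinery\dots or via a transfer argument'' is a placeholder, not an argument. In case (iv) the step ``replace $x$ by a suitable power or conjugate to arrange $x\in Z(P)$ while keeping its order witnessing the exponent gap'' does not work: nontrivial powers of $x$ have order at most that of $x$ and typically smaller, conjugation never moves an element into $Z(P)$, and an element of maximal order in a $p$-group need not be central (so the whole device of forcing $x\in Z(P)$, which you also lean on in (v), is not available in general). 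In case (v) you explicitly fall back on ``one cites the relevant fact.'' None of this invalidates the proposition --- the missing content is exactly \cite[Lemma 1.2 (2)]{FJL}, which is a legitimate black box and is what the paper uses --- but as a self-contained proof your cases (iii)--(v) are incomplete. The fix is simply to cite that lemma rather than to sketch its proof.
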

\begin{proof}
We approach the first two cases separately.

\textit{Case (i).} The result follows from Lemma \ref{lem24} (i) and (v) .

\textit{Case (ii).} In this case we apply Lemma \ref{lem24} (vii) and (vi).

\textit{Cases (iii), (iv) and (v).} Statement (2) of \cite[Lemma 1.2]{FJL} says that if $L$ satisfies (iii), or (iv), or (v) then $L$ satisfies the strong Non-Schur property $S(p)$, hence $L_{S(p)}\neq \emptyset$. The conclusion is given now by Lemma \ref{lem24} (vi).

\end{proof}
\begin{rem} Cyclic groups and $p$-groups (as in Lemma \ref{lem24} (iii) and (iv)) can also be included in the class of
finite groups satisfying one of the conditions of Proposition \ref{propfgC(p)good}, but in these cases the twisted group algebras are just ordinary group algebras.
\end{rem}
\section{Proofs of Theorem \ref{thm12} and \ref{thm14}} \label{sec3}
\begin{proof}\textbf{(of Theorem \ref{thm12}.)}
 The main ingredient is \cite[Lemma 3.5]{FJL} which, for our twisted group algebra, gives
\begin{equation}\label{eq1}
\HH^1(k_\alpha L)\cong \bigoplus\limits_{i=1}^t \Ext_{C_L(x_i)}^1(k,k\overline{x_i})
\end{equation}

where:
\begin{itemize}
    \item $\{x_i\}_{i\in\{1,...,t\}}$ is a set of representatives of the conjugacy classes in $L$;
    \item $k$ is the trivial $kC_L(x_i)$-module for any $i\in\{1,\ldots, t\}$;
    \item where $k\overline{x_i}$ denotes $k\otimes x_i$ and is given as $kC_L(x_i)$-module by 
    $$g(a\otimes x_i)=\alpha(g,x_i)(\alpha(x_i,g))^{-1}a\otimes x_i, \quad\forall a\in k,\forall g\in C_L(x_i).$$
\end{itemize}
Let $x\in L_{C(p)}\cap L_{\alpha}^{\circ}$.
Since $x\in L$ is $\alpha$-regular, it follows by \cite[Chapter 3, Lemma 6.1 (iii)]{Ka}
that there is $i_0\in\{1,...,t\}$ such that $x_{i_0}$ is $\alpha$-regular and $x$
is conjugate to $x_{i_0}$, hence
$$\alpha(g,x_{i_0})\ (\alpha(x_{i_0},g))^{-1}=1,\quad \forall g\in C_L(x_{i_0}).$$
The above statement means that  $k\overline{x_{i_0}}$ is isomorphic to $ k$ as trivial $kC_L(x_{i_0})$-module.
Next, from (\ref{eq1}) we obtain
\begin{equation}\label{eq2}\HH^1(k_\alpha L)\cong \Hc^1(C_L(x_{i_0}),k)\bigoplus\left(\bigoplus\limits_{\substack{i=1\\i\neq i_0}}^t\Hc^1(C_L(x_i),k\overline{x_i})\right)
\end{equation}
It is an easy exercise to verify that 
$$|C_L(x):(C_L(x))'|=|C_L(x_{i_0}):(C_L(x_{i_0}))'|,$$
since $x,x_{i_0}$ are conjugate. The element $x\in L$  gives  the Commutator index property $C(p)$ for $L $ thus $x_{i_0}\in L_{C(p)}$. This implies that the abelian group $C_L(x_{i_0})/(C_L(x_{i_0}))'$ has a quotient isomorphic to $C_p$, thus $$\Hc^1(C_L(x_{i_0})/C'_L(x_{i_0}),\mathbb{F}_p)\neq 0.$$
Consequently, by the inflation-restriction exact sequence we obtain $\Hc^1(C_L(x_{i_0}),\mathbb{F}_p)\neq 0,$ which by Proposition \ref{prop21H1} (i)
is equivalent to $$\Hc^1(C_L(x_{i_0}),k)\neq 0.$$
This last statement and (\ref{eq2}) assure us the conclusion.
\end{proof}

\begin{proof}\textbf{(of Theorem \ref{thm14})}

By \cite[Lemma 7]{KeLi} we know that $B$, as left $kG$-module by conjugation, decomposes 
$$B\cong Sc(G,P)\bigoplus \left(\bigoplus_{i=1}^r M_i\right),$$
where $M_i,i\in\{1,...,r\}$ are indecomposable $kG$-modules and $Sc(G,P)$ is the Scott $kG$-module with vertex $P$.
It is well known that $$\HH^1(B)\cong \Hc^1(G,B)$$
where $G$ acts by conjugation on $B$.
We obtain the decomposition
\begin{equation}\label{eq3}\HH^1(B)\cong \Hc^1(G,Sc(G,P))\bigoplus\left(\bigoplus_{i=1}^r \Hc^1(G,M_i)\right)
\end{equation}
Since $Sc(G,P)$ is the trivial $kG$-module $k$ we obtain
$$\Hc^1(G,Sc(G,P))\cong \Hc^1(G,k).$$
But $\Od^p(G)<G$ hence $\Hc^1(G,\mathbb{F}_p)\neq 0$ which,  by Proposition 
\ref{prop21H1} (i), gives $$\Hc^1(G,k)\neq 0.$$
Using $\Hc^1(G,Sc(G,P))\neq 0$ applied in (\ref{eq3}) we obtain the conclusion.
\end{proof}
\section{Inertial blocks, blocks of $p$-solvable groups and further remarks.}\label{sec4}
We begin with some properties of inertial blocks and blocks of $p$-solvable groups.
\begin{nim}\label{41}\textbf{Inertial blocks.}
Inertial blocks were introduced by Puig \cite{PuZh}. A block $b$ 
is inertial if it is basic Morita equivalent to $e$ as a block of $kN_G(P,e)$. 
Since $e$ has the same defect group $P\unlhd N_G(P,e)$, we obtain that an inertial
block $b$ is Morita equivalent to the block $e$ which has normal defect group $P$ in $N_{G}(P,e)$. In this case $b$ and $e$ have the same fusion system.
\end{nim}

\begin{nim}\label{42}\textbf{Blocks of $p$-solvable groups.} By results in \cite{Ku2}, 
see also \cite[Theorem 10.6.1]{LiBo2}, it is well known that if $b$ is a block of 
$kG$, with $G$ a $p$-solvable finite group, then there is a finite $p$-solvable 
group $L$ such that $B=kGb$ is Morita equivalent to $k_\alpha L$, where 
$[\alpha]\in H^2(L,k^{\times})$. Moreover $P$ is a Sylow $p$-subgroup of $L$, $\mathcal{O}_{p'}(L)=1$ and if $Q=\mathcal{O}_p(L)$
then $C_L(Q)=Q.$
\end{nim}

\begin{prop}\label{prop-iner-solv}
 Let $B=kGb$ be a block algebra with nontrivial defect group $P$. Let $Q$ be an $\mathcal{F}$-centric subgroup of $P$ and let $f$ be the block of $kC_G(Q)$ such that $(Q,f)\leq (P,e)$. Assume
that one of the following statements is true:
\begin{itemize}
 \item[(a)] $P$ is normal in $G$;
 \item[(b)] $b$ is inertial;
 \item[(c)] $G$ is $p$-solvable;
 \item[(d)] $G=N_G(Q,f)$ and, one of the following conditions is satisfied:
 \subitem (i) $G/QC_G(Q)$ is non-$p$-perfect;
 \subitem (ii) $G/QC_G(Q)$ has a normal Sylow $p$-subgroup;
 \subitem(iii) $Z(P)$ is not included in $P'$;
 \subitem(iv) the exponent of $P'$ is strictly smaller than the exponent of $P$;
 \subitem(v) $P$ is metacyclic.
\end{itemize}
 Then $B$ is Morita equivalent to a twisted group algebra $k_{\alpha}L$ 
 such that $L_{C(p)}\cap L_{\alpha}^{\circ}\neq\emptyset$.
\end{prop}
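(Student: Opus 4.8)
The plan is a case analysis. In each of the four situations I would produce, from a known structural reduction, a Morita equivalence between $B=kGb$ and a twisted group algebra $k_{\alpha}L$, and then check the hypothesis $L_{C(p)}\cap L_{\alpha}^{\circ}\neq\emptyset$ of Theorem \ref{thm12} for $L$ by invoking Proposition \ref{propfgC(p)good} or Lemma \ref{lem24}; note that $p$ divides $|P|$, hence $|L|$, in every case, so Proposition \ref{propfgC(p)good} is applicable. For case (a), where $P\unlhd G$, K\"ulshammer's theorem \cite{Ku1} gives a Morita equivalence $B\simeq k_{\alpha}L$ with $L=P\rtimes E$, $E=N_G(P,e)/PC_G(P)$ the inertial quotient; as $E$ is a $p'$-group, $P$ is a normal Sylow $p$-subgroup of $L$, and Proposition \ref{propfgC(p)good}(ii) applies. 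For case (b), if $b$ is inertial then by \ref{41} $B$ is basic Morita (in particular Morita) equivalent to $kN_G(P,e)e$, a block with normal defect group $P$; applying case (a) to it and composing Morita equivalences gives the conclusion.

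For case (c), \ref{42} (K\"ulshammer \cite{Ku2}) shows that $B$ is Morita equivalent to $k_{\alpha}L$ with $L$ a $p$-solvable finite group such that $\mathcal{O}_{p'}(L)=1$. The exceptional groups in \cite[Proposition 2.2]{FJL} (those with $L_{S(p)}=\emptyset$) are non-abelian simple groups, and a non-abelian simple group of order divisible by $p$ is never $p$-solvable, so $L$ is not among them; hence $L_{S(p)}\neq\emptyset$, and Lemma \ref{lem24}(vi) yields $L_{C(p)}\cap L_{\alpha}^{\circ}\neq\emptyset$.

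For case (d), the equality $G=N_G(Q,f)$ forces $Q\unlhd G$ and $f$ to be $G$-stable, so $Q$ is a normal $\mathcal{F}$-centric subgroup of $b$. By the structure theory of blocks with a normal $\mathcal{F}$-centric subgroup (K\"ulshammer--Puig; see also \cite{LiBo2}), $B$ is Morita equivalent to a twisted group algebra $k_{\alpha}L$, where $L$ fits into an extension $1\to Q\to L\to G/QC_G(Q)\to 1$ and a Sylow $p$-subgroup of $L$ is isomorphic to $P$. Under (i), $L/Q\cong G/QC_G(Q)$ is non-$p$-perfect; since $Q$ is a normal $p$-subgroup of $L$ and $\mathcal{O}^p(L/Q)=\mathcal{O}^p(L)Q/Q$, this forces $\mathcal{O}^p(L)<L$, and Proposition \ref{propfgC(p)good}(i) applies. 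Under (ii), the preimage in $L$ of a normal Sylow $p$-subgroup of $L/Q$ is a normal Sylow $p$-subgroup of $L$, and Proposition \ref{propfgC(p)good}(ii) applies. Under (iii), (iv), respectively (v), a Sylow $p$-subgroup of $L$ is isomorphic to $P$, hence satisfies the same condition, and Proposition \ref{propfgC(p)good}(iii), (iv), respectively (v), applies.

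I expect the group-theoretic verifications above to be routine once the appropriate $L$ is in hand; the real work is in quoting the structural Morita equivalences in the precise form needed, and the main obstacle is case (d). There one must confirm that the K\"ulshammer--Puig group $L$ attached to the normal $\mathcal{F}$-centric subgroup $Q$ really does have $P$ as a Sylow $p$-subgroup and $G/QC_G(Q)$ as $L/Q$, so that the local conditions (i)--(v) transfer to $L$. One should also make sure that the twisted group algebras $k_{\alpha}L$ obtained from these reductions are single blocks --- which holds because in each case $L$ has a self-centralizing normal $p$-subgroup --- so that the Morita equivalence is genuinely with $k_{\alpha}L$ and not merely with a block summand of it.
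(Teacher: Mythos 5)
For cases (a), (b) and (d) your argument coincides with the paper's: (a) via K\"ulshammer's theorem on blocks with normal defect group (quoted in the paper as \cite[Theorem 6.14.1]{LiBo2}), so that $L=P\rtimes E$ has $P$ as a normal Sylow $p$-subgroup and Proposition \ref{propfgC(p)good}(ii) applies; (b) by reducing to (a) through \ref{41}; and (d) via \cite[Corollary 8.12.9]{LiBo2}, transferring conditions (i)--(v) to $L$ through $L/Q\cong G/QC_G(Q)$ and the fact that $P$ is a Sylow $p$-subgroup of $L$. These parts are correct (your $\mathcal{O}^p(L)Q/Q=\mathcal{O}^p(L/Q)$ argument in (d)(i) replaces the paper's inflation-restriction argument, an immaterial difference, and the concluding remark about $k_\alpha L$ being a single block is not needed, since the cited results give Morita equivalences with the full twisted group algebra).

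The one real divergence is case (c), and there your shortcut is not airtight as written. You infer $L_{S(p)}\neq\emptyset$ from the claim that the exceptional groups of \cite[Proposition 2.2]{FJL} are non-abelian simple groups and that $L$, being $p$-solvable, is not among them. But the class of groups failing $S(p)$ cannot consist only of the three sporadic simple groups themselves: for instance $Ru\times C_2$ fails $S(3)$ precisely because $Ru$ does (any $3$-element lies in $Ru\times 1$ and its centralizer splits off the $C_2$ factor), so any correct formulation of the exceptions in \cite{FJL} must be in terms of groups \emph{involving} $Ru$, $J_4$ (for $p=3$) or $Th$ (for $p=5$), e.g.\ as composition factors, not an exhaustive list of groups with $L_{S(p)}=\emptyset$. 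Hence ``$L$ is not equal to one of the exceptions'' does not by itself yield $L_{S(p)}\neq\emptyset$; you need the further (easy) observation that a $p$-solvable group cannot have any of these sporadic groups as a composition factor, since its composition factors are $C_p$ or $p'$-groups while $|Ru|$ and $|J_4|$ are divisible by $3$ and $|Th|$ by $5$ --- and even this presupposes that \cite[Proposition 2.2]{FJL} is actually stated in that composition-factor form. The paper avoids any dependence on the exact wording of that citation: it proves directly, by a minimal-counterexample induction adapting the reduction arguments of \cite{FJL} (pass to $L_0/N$ when $p$ divides $|L_0/N|$, to $N$ when $L_0/N$ is a $p'$-group), that every finite $p$-solvable group of order divisible by $p$ satisfies $S(p)$, the base case being that the only simple $p$-solvable group is $C_p$; Lemma \ref{lem24}(vi) then concludes exactly as in your last step. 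Either supply the composition-factor argument after checking the precise statement of \cite[Proposition 2.2]{FJL}, or run that short induction as the paper does.
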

\begin{proof}
\begin{itemize}
 \item[(a)]  We denote by 
$$E=\Out_{\mathcal{F}}(P)=\Aut_{\mathcal{F}}(P)/\Inn(P)\cong N_G(P,e)/PC_G(P)$$
the inertial quotient of $b$, where $\mathcal{F}$ is the saturated fusion system of the block $b$. By \cite[Theorem 6.14.1]{LiBo2} (which goes back to K\"ulshammer \cite[Theorem A]{Ku1})
our  block algebra $kGb$ with normal defect group $P$ in $G$
 is Morita equivalent to a twisted 
group algebra $k_\alpha(P\rtimes E)$, where $\alpha\in Z^2(P\rtimes E,k^{\times})$ such
that 
$[\alpha]=\operatorname{inf}_E^{P\rtimes E}([\alpha])$ (by abuse of notation).
As a matter of fact $[\alpha]\in \mathcal{O}_{p'}(H^2(E,k^{\times}))$.  
But since $P \rtimes E$ has $P$ as normal Sylow $p$-subgroup we apply Proposition \ref{propfgC(p)good} (ii)
to obtain the conclusion.
\item[(b)] This statement is a consequence of (a) and \ref{41}.
\item[(c)]By \ref{42} we know that $B$ is Morita equivalent with $k_\alpha L$, 
where $\alpha\in Z^2(L,k^{\times})$ and $L$ is a $p$-solvable finite group such 
that $P$ remains a Sylow $p$-subgroup in $L$. 

We denote by $\mathcal{L}$ the class of all finite $p$-solvable groups with their
orders divisible by $p$. By $S_p(g)$ we understand a Sylow $p$-subgroup of $C_{L_0}(g)$, where $g\in L_0$ and $L_0\in\mathcal{L}$.
We adapt (almost verbatim) the methods from 
\cite{FJL} to show that any finite $p$-solvable group satisfies the strong 
Non-Schur property $S(p)$. It would then follow that $L$ has a $p$-element 
which gives the Commutator index property $C(p)$. 
This means that $L$ satisfies the assumption of Theorem \ref{thm12}, see Lemma \ref{lem24} (vi).

We argue by contradiction, so let $L_0\in\mathcal{L}$ be a minimal 
counterexample to the strong Non-Schur property $S(p)$. Obviously $L_0$ is non-abelian
and we show next that $L_0$ is a simple finite $p$-solvable group. 
By contradiction assume that there is $N\trianglelefteq L_0$ such 
that $\{1\}\neq N\neq L_0$. We split the proof in 
in two cases:
\begin{enumerate}
    \item[(1)]  $p$ divides $|L_0/N|$. It is well known that $L_0/N$ 
   remains a finite $p$-solvable group, thus $L_0/N\in\mathcal{L}$. Consequently 
    $|L_0/N|<|L_0|$, therefore $L_0/N$ has the strong Non-Schur 
    property $S(p)$. So,  there is $g\in L$ such that $\overline{g}$ (which is  notation 
    for $gN$) is a $p$-element with $$\overline{g}\in S_p(\overline{g})\setminus 
    (S_p(\overline{g}))', g_p\in (S_p(g_p))',$$
    where $g_p$ is the $p$-part of $g$.
    Next, since $\overline{S_p(g_p)}\leq S_p(\overline{g_p})$ it 
    follows that
    $$\overline{g}=\overline{g_p}\in \overline{S_p(g_p)'}\leq \overline{S_p(g_p)}'\leq 
    (S_p(\overline{g_p}))' =(S_p(\overline{g}))',$$
    which is a contradiction.
    \item[(2)] $p$ is not dividing $|L_0/N|$. It is well known that $N$ 
  remains a finite $p$-solvable group such that $p$ divides $ |N|$. 
    Thus $N\in\mathcal{L}$, with $|N|< |L_0|$. Then $N$ satisfies the strong 
    Non-Schur property $S(p)$. It follows that there is $g\in N$ a $p$-element
    such that
    $$g\in S_p^N(g)\backslash (S_p^N(g))',$$
    where $S_p^N(g)$ is a Sylow $p$-subgrup of $C_N(g)$. 
    Since $L_0/N$ is a $p'$-group it is known that $S_p^N(g)=S_p(g)$. 
  Consequently the element $g\in N$ is a $p$-element such that $g\in S_p(g)\backslash(S_p(g))'$, 
    a contradiction.
    
    From (1), (2) we obtain that $L_0\in\mathcal{L}$ 
    is a finite simple $p$-solvable group, hence $L_0$ must be a $p$-group. 
    But the only finite simple $p$-group is the cyclic group $C_p$, which is abelian.
\end{enumerate}

\item[(d)] By \cite[Corollary 8.12.9]{LiBo2} we obtain that $B$ is Morita equivalent to a twisted group algebra $k_{\alpha}L$, where $L$ is a finite group having $P$ as a Sylow $p$-subgroup, such that $Q$ is normal in $L$ and $L/Q\cong G/QC_G(Q)$.
\begin{itemize}
\item[(i)] Since $G/QC_G(Q)\cong L/Q$ is non-$p$-perfect, it follows that $L$ is non-$p$-perfect (one argument is the inflation-restriction exact sequence). Next,  Proposition \ref{propfgC(p)good} (i) assure us the conclusion.
\item[(ii)] Since $G/QC_G(Q)\cong L/Q$ has a normal Sylow $p$-subgroup, it follows that $L$ has the same property and we apply Proposition \ref{propfgC(p)good} (ii).

\item[(iii),] (iv) and (v): 

For proving the conclusion under one of these assumptions we apply Proposition \ref{propfgC(p)good} (iii), (iv) and (v), keeping in mind that $P$ becomes a Sylow $p$-subgroup of $L$.
\end{itemize}

\end{itemize}

\end{proof}
Recall that the principal block of $kG$ is 
the unique block of $kG$ contained in the trivial $kG$-module $k$. 
Defect groups of principal blocks are Sylow $p$-subgroups.
If $b$ is the principal block of a non-$p$-perfect finite group $G$, since 
$Sc(G,P)\cong k$ as trivial $kG$-module, we are under the assumptions of              
Theorem \ref{thm14}. 
There exist also non-principal blocks for which the defect groups are the Sylow $p$-subgroups, see \cite[Remark 1]{KeKK}
and \cite[Example 10.2.1]{LiBo2}.

In the following remark, statement a), we mention an example of a block which is not principal
but has defect group a Sylow $p$-subgroup and, for which the underlying group is 
non-$p$-perfect; thus, we are still under the assumptions of Theorem \ref{thm14}. 
\begin{rem}\begin{itemize}
\item[a)] Suppose $p=3$ and set $G=SL_2(3)$. The order of $G$ is $2^3\cdot 3$ and the group
$$P=\left\{\left(\begin{array}{cc}
     1&  b\\
     0& 1
\end{array}\right)| b\in \mathbb{F}_3\right\}$$
is a Sylow $3$-subgroup of $G$.
The block  $b_1$ of $kG$  from  \cite[Example 6.7.9]{LiBo2} is a non-principal block with 
defect group $P$. Since
\begin{align*}
      & H=\left\{ \left(\begin{array}{cc}
     1&  0\\
     0& 1
\end{array}\right),\left(\begin{array}{cc}
     2&  0\\
     0& 2
\end{array}\right),\left(\begin{array}{cc}
     0&  2\\
     1& 0
\end{array}\right),\left(\begin{array}{cc}
     0&  1\\
     2& 0
\end{array}\right),\left(\begin{array}{cc}
     1&  1\\
     1& 2
\end{array}\right), \right. \\
      & \hspace{1cm} \left.\left(\begin{array}{cc}
     2&  2\\
     2& 1
\end{array}\right),  \left(\begin{array}{cc}
     2&  1\\
     1& 1
\end{array}\right),\left(\begin{array}{cc}
     1&  2\\
     2& 2
\end{array}\right) \right\}
\end{align*}
 is the only normal $2$-subgroup (isomorphic to $Q_8$) and $|G:H|=3$, it follows that $\mathcal{O}^3(G)<G$.
 \item[b)]For any saturated fusion system $\mathcal{F}$ on an arbitrary $p$-group
$P$ we know that the first cohomology of the fusion system is
$$\Hc^1(P,k)^{\mathcal{F}}=\Hom_{Grp}(P/P^p\mathrm{foc}(\mathcal{F}),\cdot),(k,+)).$$
Consequently  it is easy to verify that $\Hc^1(P,k)^{\mathcal{F}}\neq 0$ if and only if $\mathrm{foc}(\mathcal{F})<P$. If we apply this to finite group block algebras $\mathcal{F}=\mathcal{F}_{(P,e)}(G,b)$, in conjunction with the embedding \cite[Theorem 5.6]{Li2} of the first block cohomology $\Hc^1(P,k)^{\mathcal{F}}$ into the first Hochschild cohomology $\HH^1(B)$, we obtain a sufficient criterion for a positive answer of Question \ref{question}, see \cite[Proposition 12.8]{Li}. One approach for the future study of Question \ref{question} is to investigate  block algebras $B$ with $\mathrm{foc}(\mathcal{F})=P$.
\end{itemize}
\end{rem}





\end{document}